\title{On the metric dimension of Grassmann graphs} 
\author{Robert F.~Bailey and
Karen Meagher\footnote{Department of Mathematics and Statistics, University of Regina, 3737 Wascana Parkway, Regina, SK, S4S 0A2, Canada.  E-mail: \texttt{robert.bailey@uregina.ca}, \texttt{karen.meagher@uregina.ca}}
}
\newtheorem{thm}{Theorem}
\newtheorem{lemma}[thm]{Lemma}
\newtheorem{prop}[thm]{Proposition}
\theoremstyle{definition}
\newtheorem{defn}[thm]{Definition}
\newcommand{\rank}{\mathrm{rank}}
\begin{document}
\maketitle
\begin{abstract}
The {\em metric dimension} of a graph $\Gamma$ is the least number of vertices in a set with the property that the list of distances from any vertex to those in the set uniquely identifies that vertex.  We consider the Grassmann graph $G_q(n,k)$ (whose vertices are the $k$-subspaces of $\mathbb{F}_q^n$, and are adjacent if they intersect in a $(k-1)$-subspace) for $k\geq 2$. We find an upper bound on its metric dimension, which is equal to the number of $1$-dimensional subspaces of $\mathbb{F}_q^n$.  We also give a construction of a resolving set of this size in the case where $k+1$ divides $n$, and a related construction in other cases.
\end{abstract}

\section{Introduction}
In this paper, we are concerned with finding an upper bound on the metric dimension of Grassmann graphs.  We use the notation of \cite{bsmd}.  To define the metric dimension of a graph, we need the following concept.

\begin{defn} \label{defn:resolving1}
A {\em resolving set} for a graph $\Gamma=(V,E)$ is a set of vertices $S=\{v_1,\ldots,v_k\}$ such that for all $w \in V$, the list of distances $\mathcal{D}(w|S)=(d(w,v_1),\ldots,d(w,v_k))$ uniquely determines $w$.
\end{defn}

That is, $S$ is a resolving set for $\Gamma$ if for any pair of vertices $u,w$, $\mathcal{D}(u|S) = \mathcal{D}(w|S)$ if and only if $u=w$.  We call the list of distances $\mathcal{D}(w|S)$ the {\em code} of the vertex $w$, so $S$ is a resolving set if and only if each vertex has a different code.

\begin{defn} \label{defn:metdim}
The {\em metric dimension} of $\Gamma$, denoted $\mu(\Gamma)$, is the smallest size of a resolving set for $\Gamma$.
\end{defn}

Metric dimension was first introduced in the 1970s, independently by Harary and Melter \cite{HararyMelter76} and by Slater \cite{Slater75}.  In recent years, a considerable literature has developed (see \cite{bsmd} for details).  A particularly interesting case is that of {\em distance-regular graphs} (we refer the reader to the book by Brouwer, Cohen and Neumaier \cite{BCN} for background on this topic).  In fact, in the case of primitive distance-regular graphs, bounds on a parameter equivalent to the metric dimension were obtained in 1981 by Babai \cite{Babai81} (where they were used to obtain combinatorial bounds on the possible orders of primitive permutation groups: see \cite{bsmd} for further details).

Many families of distance-regular graphs are so-called ``graphs with classical parameters'' (see \cite[Chapter 9]{BCN}): these include the well-known {\em Hamming graphs} and {\em Johnson graphs}, for which metric dimension has already been studied.  In the case of Hamming graphs, various results on the metric dimension have been obtained (although not always phrased in these terms: see \cite{bsmd}). For the Johnson graphs $J(n,k)$, where the vertices are $k$-subsets of an $n$-set, and two $k$-subsets are adjacent if they intersect in a $(k-1)$-set, the exact value is known for $k=2$: this was obtained by Cameron and the first author in \cite{bsmd}.  For $k \geq 3$ bounds have been obtained by the authors and others in \cite{johnson,Caceres09}; the following result appears in \cite{johnson}.

\begin{thm} \label{thm:Johnson}
The metric dimension of the Johnson graph $J(n,k)$ is at most $n$. 
\end{thm}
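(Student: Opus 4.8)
The plan is to reduce the resolving condition to a linear-algebra statement about incidence vectors. Recall that in $J(n,k)$ the distance between two $k$-subsets $A,B$ of the ground set $[n]=\{1,\dots,n\}$ is $d(A,B)=k-|A\cap B|$, so the distance from a vertex $w$ to a set $v_j$ records exactly the intersection size $|w\cap v_j|$. Identifying each $k$-subset with its $\{0,1\}$-incidence vector in $\mathbb{R}^n$, write $\chi_w$ for the vector of $w$; then $|w\cap v_j|=\chi_{v_j}^{\top}\chi_w$. Consequently, for a candidate set $S=\{v_1,\dots,v_m\}$, the code $\mathcal D(w\mid S)$ and the vector $M\chi_w$ determine one another, where $M$ is the $m\times n$ matrix whose rows are $\chi_{v_1},\dots,\chi_{v_m}$. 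Hence $S$ resolves $J(n,k)$ if and only if the linear map $x\mapsto Mx$ is injective on incidence vectors of $k$-subsets, and this certainly holds whenever $M$ has rank $n$.

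So the strategy is to \textbf{exhibit $n$ distinct $k$-subsets whose incidence vectors are linearly independent.} Any such choice gives an invertible $n\times n$ matrix $M$, and the argument above shows the corresponding $n$ vertices form a resolving set, proving $\mu(J(n,k))\le n$.

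It remains to produce the $n$ independent incidence vectors, which is the only real content. I would show that the incidence vectors of all $k$-subsets already span $\mathbb{R}^n$ whenever $1\le k\le n-1$, so that a basis of $n$ of them can be extracted. For the spanning, fix any $(k-1)$-subset $T$ and two elements $i\neq j$ not in $T$; then $T\cup\{i\}$ and $T\cup\{j\}$ are $k$-subsets whose incidence vectors differ by $e_i-e_j$. Ranging over all such choices (possible precisely because $k-1\le n-2$) produces every $e_i-e_j$, and hence the whole ``sum-zero'' hyperplane $H=\{x:\sum_\ell x_\ell=0\}$. Any single incidence vector has coordinate-sum $k\neq0$, so it lies off $H$, and adjoining it spans all of $\mathbb{R}^n$.

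The main point to be careful about is simply having enough vectors available and handling the extreme cases. For $2\le k\le n-2$ we have $\binom nk\ge n$, so a linearly independent set of size $n$ can indeed be chosen; when $k=n-1$ there are exactly $n$ subsets, with incidence vectors $\mathbf 1-e_i$ forming the invertible matrix $\mathbf 1\mathbf 1^{\top}-I$; and $k=n$ is degenerate (a single vertex). I expect no genuine obstacle beyond this bookkeeping. I would also remark that the same computation gives slightly more: working inside the hyperplane $H$, where $\chi_{v_j}$ and $\chi_{v_j}-\tfrac kn\mathbf 1$ have the same action, one needs only $n-1$ vectors whose restrictions to $H$ are independent, which would sharpen the bound to $n-1$; but the statement as given requires only rank $n$.
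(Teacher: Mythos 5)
Your proposal is correct. Note that this paper does not actually prove Theorem~\ref{thm:Johnson} itself --- it quotes it from \cite{johnson} --- but the proof it is modelled on is reproduced in Section~2 in its $q$-analogue form, and your argument shares that proof's skeleton: distances in $J(n,k)$ record intersection sizes, any family of $k$-subsets whose incidence matrix has rank $n$ is resolving, and such a family of size $n$ is extracted from the incidence matrix of \emph{all} $k$-subsets once that matrix is known to have full rank. Where you genuinely differ is in the full-rank step. The paper's route (Lemma~\ref{lemma:rank2}, in its set-theoretic analogue) computes the Gram matrix $M^{\mathrm{T}}M=\binom{n-2}{k-2}J+\left(\binom{n-1}{k-1}-\binom{n-2}{k-2}\right)I$ and checks that its determinant is nonzero; you instead prove spanning directly: the differences $\chi_{T\cup\{i\}}-\chi_{T\cup\{j\}}=e_i-e_j$, with $T$ a $(k-1)$-set avoiding $i,j$ (possible since $k\leq n-1$), generate the sum-zero hyperplane, and any single incidence vector lies off it. Your version is more elementary --- no determinant evaluation --- and carries a genuine bonus: as you observe, since all differences of incidence vectors lie in that hyperplane, $n-1$ vertices whose centered vectors span it already resolve the graph, sharpening the bound to $n-1$, which the Gram computation does not yield for free. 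What the Gram-matrix route buys is exactly what this paper needs: it transfers verbatim to the Grassmann graph (Lemma~\ref{lemma:rank2}), whereas your difference trick degrades there, since for a $(k-1)$-subspace $T$ and vectors $x,y\notin T$ the difference of the incidence vectors of $T+\langle x\rangle$ and $T+\langle y\rangle$ has $2q^{k-1}$ nonzero entries rather than two, so spanning is no longer immediate. Your bookkeeping for the edge cases $k=n-1$ and $k=n$ is fine.
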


After the Johnson graphs, the obvious next family to consider is the Grassmann graphs.  Throughout this paper, $V(n,q)$ denotes the $n$-dimensional vector space over the finite field $\mathbb{F}_q$.

\begin{defn} \label{defn:Grassmann}
The {\em Grassmann graph} $G_q(n,k)$ has as its vertex set the set of all $k$-dimensional subspaces of $V(n,q)$, and two vertices are adjacent if the corresponding subspaces intersect in a subspace of dimension $k-1$.
\end{defn}

Note that if $k=1$, we have a complete graph, so we shall assume that $k\geq 2$.  Also, it is not difficult to show that $G_q(n,k) \cong G_q(n,n-k)$, so it suffices to consider $k \leq n/2$.  The number of vertices is simply the Gaussian binomial coefficient:
\[ {n \brack k}_q = \prod_{i=0}^{k-1} \frac{(q^n-q^i) }{(q^k-q^i) }. \]
The Grassmann graph is a distance-regular graph of diameter $k$: the distance between two vertices $X$ and $Y$ is $k-\dim(X\cap Y)$.  It is considered to be the ``$q$-analogue'' of the Johnson graph $J(n,k)$.  

Having defined the Grassmann graphs, we are now ready to state our main result.

\begin{thm} \label{thm:main}
Let $G_q(n,k)$ be the Grassmann graph, where $2\leq k \leq n/2$.  Then the metric dimension of $G_q(n,k)$ is at most ${n \brack 1}_q$.
\end{thm}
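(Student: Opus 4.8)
The plan is to exhibit an explicit resolving set $S$ consisting of $k$-subspaces with $|S| = {n \brack 1}_q$. Since $d(X,Y) = k - \dim(X \cap Y)$, knowing the code $\mathcal{D}(Y|S)$ is equivalent to knowing $\dim(Y \cap W)$ for every $W \in S$, so $S$ is resolving precisely when the map $Y \mapsto (\dim(Y\cap W))_{W \in S}$ is injective on the $k$-subspaces. I would aim for the stronger property that this profile lets one reconstruct $Y$ outright, exploiting the elementary fact that $Y$ is the span of the $1$-subspaces it contains.

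The cleanest case is $k+1 \mid n$, where $V(n,q)$ admits a $(k+1)$-spread $\Sigma = \{U_1, \ldots, U_s\}$, that is, a partition of the points of $V(n,q)$ into $(k+1)$-subspaces, with $s = {n \brack 1}_q / {k+1 \brack 1}_q$. I would take $S$ to be the set of all $k$-subspaces contained in some $U_j$. Since each $U_j$ has exactly ${k+1 \brack 1}_q$ hyperplanes, and distinct spread elements meet only in $0$ and so share no nonzero subspace, this gives $|S| = s \cdot {k+1 \brack 1}_q = {n \brack 1}_q$, the required size. To show that $S$ resolves, fix a $k$-subspace $Y$ and argue in two steps. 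First, within a single $U_j$, every hyperplane $W$ of $U_j$ satisfies $\dim(Y \cap W) = \dim(Y \cap U_j)$ or $\dim(Y \cap U_j) - 1$ according as $W$ contains $Z_j := Y \cap U_j$ or not; hence the sub-profile of $Y$ over the hyperplanes of $U_j$ identifies exactly which of them contain $Z_j$, and intersecting those hyperplanes recovers the subspace $Z_j$ itself. Second, because $\Sigma$ partitions the points, every point of $Y$ lies in a unique $U_j$ and therefore in $Z_j$; consequently $Y = \langle Z_1, \ldots, Z_s\rangle$, so recovering all of the $Z_j$ recovers $Y$. This shows that $S$ is resolving and establishes the bound when $k+1 \mid n$.

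For the remaining residues of $n$ modulo $k+1$ no $(k+1)$-spread exists, and I would replace $\Sigma$ by a related covering of the points of $V(n,q)$ by $(k+1)$-subspaces --- for instance, a spread of a large subspace together with a few extra $(k+1)$-subspaces chosen to cover the leftover points --- retaining the same two-step recovery argument, which used only that the chosen $(k+1)$-subspaces cover every point. The main obstacle here is quantitative rather than structural: one must ensure that such a covering can be chosen economically enough that the total number of hyperplanes it contributes does not exceed ${n \brack 1}_q$. Controlling this count in the non-divisible cases is the delicate part, and is exactly the point at which the construction must be adapted away from the clean spread.
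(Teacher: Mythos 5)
Your argument for the case $k+1 \mid n$ is correct, and the resolving set you build is exactly the one in the paper's Proposition 9 (the spread construction); your two-step reconstruction --- identifying the hyperplanes of each spread member $U_j$ attaining the maximal value of the sub-profile, intersecting them to recover $Z_j = Y\cap U_j$, then spanning the $Z_j$ to recover $Y$ --- is in fact a cleaner argument than the case analysis given there. The gap is everything after that. The theorem asserts the bound ${n \brack 1}_q$ for \emph{all} $2\leq k\leq n/2$, and your treatment of the case $(k+1)\nmid n$ is only a sketch; worse, the route you sketch provably cannot reach the stated bound. Your recovery argument needs all ${k+1 \brack 1}_q$ hyperplanes of every member of a point-cover by $(k+1)$-subspaces, so count point--hyperplane incidences: each chosen hyperplane contains ${k \brack 1}_q$ points; each point lies in at least the ${k \brack 1}_q$ hyperplanes of one covering member through it; and a point $p$ lying in two members $U_i,U_j$ lies in at least $2{k \brack 1}_q - 1$ chosen hyperplanes, since a common hyperplane of $U_i$ and $U_j$ must equal $U_i\cap U_j$, so there is at most one. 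When $(k+1)\nmid n$ no spread exists, so any cover has a multiply-covered point, and the incidence count gives $|S|\cdot{k \brack 1}_q \geq {n \brack 1}_q{k \brack 1}_q + \left({k \brack 1}_q - 1\right)$, hence $|S| > {n \brack 1}_q$. So no covering is ``economical enough'': the quantitative obstacle you flagged is fatal to this approach, not merely delicate. (Consistently with this, the paper's own covering construction for the non-divisible case, Proposition 10, only achieves the weaker bound ${(n-t)+(k+1) \brack 1}_q$.)

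The paper proves the full theorem by an entirely different, non-constructive argument, and you need something of this kind to close the gap. Let $M$ be the $0$--$1$ incidence matrix of \emph{all} $k$-subspaces of $V(n,q)$ against the ${n \brack 1}_q$ one-dimensional subspaces. Then $M^{\mathrm{T}}M = {n-2 \brack k-2}_q J + \left({n-1 \brack k-1}_q - {n-2 \brack k-2}_q\right) I$, which has nonzero determinant, so $\rank(M) = {n \brack 1}_q$; hence some ${n \brack 1}_q$ rows of $M$ form an invertible square submatrix $M'$. The corresponding $k$-subspaces $U_1,\ldots,U_{{n \brack 1}_q}$ form a resolving set: if $A,B$ had the same intersection-dimension profile with all $U_i$, then $M'a = M'b$ (the $i$-th entry of $M'a$ is ${d \brack 1}_q$ where $d = \dim(A\cap U_i)$, which is determined by the profile), and invertibility of $M'$ forces $a=b$, i.e.\ $A=B$. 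This argument is uniform in $n$ and $k$ and needs no divisibility hypothesis; its price is that it produces no explicit resolving set, which is precisely what your spread construction supplies when $k+1$ divides $n$.
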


This bound, and its proof, are inspired by a similar result for the Johnson graph $J(n,k)$ given in Theorem~\ref{thm:Johnson} above.

\section{Proof of the bound}
The proof of Theorem~\ref{thm:main} uses linear algebra, and is similar to an analogous proof given in \cite{johnson} for the Johnson graphs.  We observe that a collection of $k$-subspaces $\{U_1,\ldots,U_m\}$ of $V(n,q)$ is a resolving set for the Grassmann graph $G_q(n,k)$ if and only if, for any pair of distinct $k$-subspaces $A,B \leq V(n,q)$, we have
\begin{equation}\label{eqn:dim}
(\dim(A\cap U_1),\ldots,\dim(A\cap U_m)) \neq (\dim(B\cap U_1),\ldots,\dim(B\cap U_m)). 
\end{equation}
Consider the collection of all 1-dimensional subspaces of $V(n,q)$, and label these as $L_1,\ldots,L_N$ (where $N={n \brack 1}_q$).  The {\em incidence vector} of a $k$-subspace $U$ is the vector $(u_1,\ldots,u_N)\in\mathbb{R}^N$ whose entries are
\[ u_i = \left\{ \begin{array}{ccl} 1 & \,\, & \textnormal{if $L_i \leq U$} \\ 0 && \textnormal{otherwise.} \end{array} \right. \]
Furthermore, given a collection of $k$-subspaces $\mathcal{S} = \{U_1,\ldots,U_m\}$, the {\em incidence matrix} of $\mathcal{S}$ is the $m\times N$ matrix whose rows are the incidence vectors of $U_1,\ldots,U_m$.

\begin{lemma}\label{lemma:rank}
If $M$ is the incidence matrix of a family $\mathcal{S} = \{U_1,\ldots,U_m\}$ of $k$-subspaces of $V(n,q)$, and $\rank(M)={n \brack 1}_q$, then $\mathcal{S}$ is a resolving set for $G_q(n,k)$.
\end{lemma}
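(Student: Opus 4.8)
The plan is to translate the resolving condition \eqref{eqn:dim} into a statement about the kernel of the linear map $\mathbf{x} \mapsto M\mathbf{x}$, and then read off the conclusion from the rank hypothesis. The key observation is that the intersection dimensions $\dim(A \cap U_i)$ can be recovered from the product $M\mathbf{a}$, where $\mathbf{a} \in \mathbb{R}^N$ is the incidence vector of $A$.

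First I would examine a single inner product. If $\mathbf{u}_i$ denotes the incidence vector of $U_i$ and $\mathbf{a}$ that of $A$, then the $i$th entry of $M\mathbf{a}$ is $\mathbf{u}_i \cdot \mathbf{a} = \sum_{j=1}^{N} u_{i,j}\, a_j$, which counts precisely the $1$-dimensional subspaces $L_j$ contained in both $U_i$ and $A$, i.e.\ the $1$-subspaces of $A \cap U_i$. Writing $d = \dim(A \cap U_i)$, this count equals ${d \brack 1}_q = (q^{d}-1)/(q-1)$, a strictly increasing function of $d$. Hence the entry $\mathbf{u}_i \cdot \mathbf{a}$ and the dimension $\dim(A \cap U_i)$ determine one another.

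It follows that the dimension vector $(\dim(A\cap U_1),\ldots,\dim(A\cap U_m))$ and the vector $M\mathbf{a}$ determine one another entrywise. Consequently the resolving condition \eqref{eqn:dim}, that distinct $k$-subspaces $A,B$ have distinct dimension vectors, is equivalent to requiring $M\mathbf{a}\neq M\mathbf{b}$ whenever $A\neq B$; equivalently, $M(\mathbf{a}-\mathbf{b})\neq\mathbf{0}$.

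To finish I would apply the rank hypothesis. A $k$-subspace is determined by the set of $1$-subspaces it contains, so distinct subspaces have distinct incidence vectors and $\mathbf{a}-\mathbf{b}\neq\mathbf{0}$ in $\mathbb{R}^N$. If $\rank(M)={n \brack 1}_q=N$, then $M$ has full column rank and so trivial kernel; thus $M(\mathbf{a}-\mathbf{b})\neq\mathbf{0}$, and by the equivalence above $\mathcal{S}$ resolves $G_q(n,k)$. I do not expect a genuine obstacle here: the one point needing care is the inner-product identity together with its monotonicity in $d$, after which the result is immediate linear algebra.
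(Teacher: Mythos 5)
Your proposal is correct and follows essentially the same argument as the paper: identify the $i$th entry of $M\mathbf{a}$ with ${d \brack 1}_q$ where $d=\dim(A\cap U_i)$, and use full rank to conclude $M\mathbf{a}\neq M\mathbf{b}$ for distinct incidence vectors. Your added remarks (strict monotonicity of ${d \brack 1}_q$ in $d$, and that distinct subspaces have distinct incidence vectors) are points the paper leaves implicit, but the route is identical.
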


\proof Suppose we have two distinct $k$-subspaces $A,B\leq V(n,q)$, whose incidence vectors are $a,b$ respectively.  Now, consider the entries of the vector $Ma$: by construction, we have that the $i^{\textnormal{th}}$ entry of $Ma$ is precisely the number of 1-dimensional subspaces in $A\cap U_i$, which is ${d \brack 1}_q$ (where $d= \dim(A\cap U_i)$).  Consequently, it follows that if $Ma\neq Mb$, then 
condition (\ref{eqn:dim}) holds.  Also, it is clear that if $\rank(M)={n \brack 1}_q$, the linear transformation represented by $M$ is one-to-one, and thus $Ma\neq Mb$ for any distinct vectors $a,b$.

Hence, if $\rank(M)={n \brack 1}_q$, condition (\ref{eqn:dim}) holds for any distinct $k$-subspaces $A,B$, and therefore $\mathcal{S}$ is a resolving set for $G_q(n,k)$.
\endproof

So to prove Theorem~\ref{thm:main}, it remains to show that a family of subsets which such an incidence matrix actually exists.  We will do this by showing that the incidence matrix of {\em all} $k$-subspaces has rank ${n \brack 1}_q$.  Recall that, for any real matrix~$M$, we have $\rank(M^{\mathrm{T}} M) = \rank(M)$.

\begin{lemma}\label{lemma:rank2}
Let $M$ be the incidence matrix of the set of all $k$-subspaces of $V(n,q)$.  Then $\rank(M)={n \brack 1}_q$.
\end{lemma}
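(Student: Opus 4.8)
The plan is to exploit the identity $\rank(M)=\rank(M^{\mathrm{T}}M)$ recalled just above the statement, so that it suffices to prove the $N\times N$ Gram matrix $M^{\mathrm{T}}M$ is nonsingular, where $N={n \brack 1}_q$. Since we work over $\mathbb{R}$, $M^{\mathrm{T}}M$ is automatically positive semidefinite, so in fact I only need to rule out a zero eigenvalue.

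First I would compute the entries of $M^{\mathrm{T}}M$. The $(i,j)$ entry is the inner product of columns $i$ and $j$ of $M$, which counts the $k$-subspaces $U$ containing both $L_i$ and $L_j$. For $i=j$ this is the number of $k$-subspaces through the single point $L_i$; passing to the quotient $V(n,q)/L_i\cong V(n-1,q)$, such subspaces correspond to $(k-1)$-subspaces of $V(n-1,q)$, so the diagonal entry is $\alpha={n-1 \brack k-1}_q$. For $i\neq j$, the sum $L_i+L_j$ is a $2$-dimensional subspace, and a $k$-subspace contains both $L_i$ and $L_j$ exactly when it contains $L_i+L_j$; the same quotient argument (now by a $2$-subspace) gives the off-diagonal entry $\beta={n-2 \brack k-2}_q$. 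Here the hypothesis $k\geq 2$ is what makes this count meaningful.

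Hence $M^{\mathrm{T}}M=(\alpha-\beta)I+\beta J$, where $J$ is the all-ones matrix. This has the classical spectrum of a matrix of the form $aI+bJ$: the eigenvalue $\alpha+(N-1)\beta$ on the all-ones vector, and the eigenvalue $\alpha-\beta$ on its orthogonal complement (with multiplicity $N-1$). It is therefore nonsingular precisely when both $\alpha-\beta\neq 0$ and $\alpha+(N-1)\beta\neq 0$.

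The last step is to verify these do not vanish, and this is the only point requiring a small argument. Since all the relevant Gaussian binomials are positive, $\beta\geq 1$ and $\alpha+(N-1)\beta>0$ immediately. For $\alpha-\beta$, I would give the geometric observation that the number of $k$-subspaces through the point $L_i$ strictly exceeds the number through the line $L_i+L_j$: every $k$-subspace containing $L_i+L_j$ contains $L_i$, yet there exist $k$-subspaces containing $L_i$ but missing $L_j$, so $\alpha>\beta$. (Alternatively this is immediate from the recurrence ${n-1 \brack k-1}_q=q^{n-k}{n-2 \brack k-2}_q+{n-2 \brack k-1}_q$, since $n-k\geq 1$.) Both eigenvalues are thus strictly positive, $M^{\mathrm{T}}M$ is positive definite and hence nonsingular, giving $\rank(M)=\rank(M^{\mathrm{T}}M)=N$. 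I expect no serious obstacle here: the whole difficulty is really the bookkeeping of recognizing the $aI+bJ$ structure, after which positive-definiteness over $\mathbb{R}$ makes the nonvanishing transparent.
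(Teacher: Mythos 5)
Your proposal is correct and follows essentially the same route as the paper: compute $M^{\mathrm{T}}M$, identify it as ${n-2 \brack k-2}_q J + \left({n-1 \brack k-1}_q - {n-2 \brack k-2}_q\right)I$, and conclude nonsingularity. The only difference is that you spell out the step the paper dismisses as ``a straightforward exercise,'' and your verification (via the spectrum of $aI+bJ$ and the strict inequality ${n-1 \brack k-1}_q > {n-2 \brack k-2}_q$) is sound.
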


\proof Consider the matrix $M^{\mathrm{T}} M$, which is an ${n \brack 1}_q \times {n \brack 1}_q$ square matrix.  Now, the diagonal entries of $M^{\mathrm{T}} M$ are all ${n-1 \brack k-1}_q$: this corresponds to the number of $k$-subspaces containing a given $1$-subspace.  Also, the off-diagonal entries of $M^{\mathrm{T}} M$ are all ${n-2 \brack k-2}_q$: this corresponds to the number of $k$-subspaces containing a given pair of $1$-subspaces.  Therefore, we have
\[ M^{\mathrm{T}} M = {n-2 \brack k-2}_q J + \left( {n-1 \brack k-1}_q - {n-2 \brack k-2}_q \right)I \]
(where $I$ is the identity matrix and $J$ the all-ones matrix).  It is then a straightforward exercise to see that $\det(M^{\mathrm{T}} M)\neq 0$, and thus $\rank(M)=\rank(M^{\mathrm{T}} M)={n \brack 1}_q$.  \endproof

\begin{proof}[Proof of Theorem~\ref{thm:main}]
Let $M$ denote the incidence matrix of all $k$-subspaces of $V(n,q)$.  By Lemma~\ref{lemma:rank2}, this matrix has rank ${n \brack 1}_q$; in particular, $M$ has an invertible ${n \brack 1}_q\times {n \brack 1}_q$ submatrix $M'$.  By Lemma~\ref{lemma:rank}, $M'$ is the incidence matrix of a resolving set for $G_q(n,k)$ of size ${n \brack 1}_q$.  Consequently the metric dimension of $G_q(n,k)$ is at most ${n \brack 1}_q$.
\end{proof}

\section{Constructions of resolving sets}
The disadvantage of the proof of Theorem~\ref{thm:main} given above is that it does not provide an explicit construction of a resolving set for the Grassmann graph $G_q(n,k)$.  In this section, we address this issue; once again, we adapt earlier results for Johnson graphs.

In~\cite{Caceres09}, C\'aceres {\em et al.}\ give a construction of a resolving set for the Johnson graph $J(n,k)$, which has size $(k+1) \lceil n/(k+1) \rceil$.  This is explained most easily in the case where $k+1$ divides $n$, where one partitions the $n$-set into a collection of disjoint $(k+1)$-sets, and in each part taking all $k$-subsets, to obtain a resolving set for $J(n,k)$.  Various refinements can be made to obtain slight improvements to the size, but the principle of the construction remains the same.  (See \cite{johnson} for details.)

To obtain resolving sets for the Grassmann graphs, we will give separate constructions for two cases: the first, when $n$ is divisible by $k+1$, is considered in Proposition \ref{prop:divides}; the second, when $n$ is not divisible by $k+1$, is considered in Proposition \ref{prop:otherwise}.  We could have combined the two separate cases into one construction, but in doing so risks losing the intuition of how the constructions arise.

Our constructions require some notions from finite geometry.

\begin{defn} \label{defn:spread}
A $t$-{\em spread} of $V(n,q)$ is collection of $t$-subspaces $\{W_1,\ldots,W_m\}$ with the following properties:
\begin{itemize}
\item[(i)] any non-zero vector $x\in V(n,q)$ belongs to exactly one $W_i$;
\item[(ii)] if $i\neq j$, then $W_i\cap W_j = \{0\}$.
\end{itemize}
\end{defn}

A classical result (see Dembowski \cite[p.\ 29]{Dembowski68}) shows that a $t$-spread of $V(n,q)$ exists if and only if $t$ is a divisor of $n$.  The number of subspaces in a $t$-spread is necessarily $(q^n-1)/(q^t-1)$.

\begin{prop} \label{prop:divides}
Suppose $k+1$ divides $n$, and that $\mathcal{S}=\{W_1,\ldots,W_m\}$ is a $(k+1)$-spread of $V(n,q)$.  Let $\mathcal{M}$ denote the union of the collections of all $k$-subspaces of each $W_i$; that is,
\[ \mathcal{M} = \bigcup_{i=1}^m \{U\leq W_i \, : \, \dim(U)=k \}. \]
Then $\mathcal{M}$ is a resolving set for $G_q(n,k)$ of size ${n \brack 1}_q$.
\end{prop}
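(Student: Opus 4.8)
The plan is to invoke Lemma~\ref{lemma:rank}: since I will show below that $|\mathcal{M}| = {n \brack 1}_q = N$, the incidence matrix $M$ of $\mathcal{M}$ is square of order $N$, and it suffices to prove that $M$ is invertible. First I would pin down the size. If a $k$-subspace were contained in two distinct spread elements $W_i, W_j$, it would lie in $W_i\cap W_j=\{0\}$, which is impossible for $k\geq 2$; hence the $k$-subspaces contributed by distinct $W_i$ are all distinct and there is no overcounting. Each $W_i$ is $(k+1)$-dimensional and so contains exactly ${k+1 \brack k}_q = {k+1 \brack 1}_q$ subspaces of dimension $k$. Multiplying by the number $m=(q^n-1)/(q^{k+1}-1)$ of spread elements gives $|\mathcal{M}| = \tfrac{q^n-1}{q^{k+1}-1}\cdot\tfrac{q^{k+1}-1}{q-1} = {n \brack 1}_q$, as claimed.

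The key structural observation is that the spread forces $M$ to be \emph{block-diagonal}. By property (i) of Definition~\ref{defn:spread}, every nonzero vector lies in exactly one spread element, so every $1$-subspace $L$ lies in exactly one $W_i$; this partitions the columns of $M$ into $m$ blocks according to which $W_i$ contains $L$. The rows (the members of $\mathcal{M}$) are likewise partitioned according to which $W_i$ they belong to. Now suppose a row $U\leq W_i$ and a column $L\leq W_j$ give a nonzero entry, i.e.\ $L\leq U$. Then $L\leq U\leq W_i$, which forces $j=i$. Thus every nonzero entry of $M$ lies in one of the diagonal blocks $M_i$, where $M_i$ records the incidences between the $k$-subspaces of $W_i$ and the $1$-subspaces of $W_i$.

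Each diagonal block $M_i$ is precisely the incidence matrix of all $k$-subspaces of $W_i\cong V(k+1,q)$ against all its $1$-subspaces, and it is square of order ${k+1 \brack 1}_q$. Applying Lemma~\ref{lemma:rank2} with $n$ replaced by $k+1$ yields $\rank(M_i)={k+1 \brack 1}_q$, so each $M_i$ is invertible; the determinant computation in that proof remains valid in this boundary case, with the relevant diagonal and off-diagonal parameters being ${k \brack 1}_q$ and ${k-1 \brack 1}_q$. Since a block-diagonal matrix with invertible diagonal blocks is invertible, $M$ is invertible, whence $\rank(M)={n \brack 1}_q$ and Lemma~\ref{lemma:rank} shows that $\mathcal{M}$ is a resolving set.

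The only genuine idea is the block-diagonalisation, which rests entirely on the two defining properties of a spread (every point in a unique element, and trivial pairwise intersections); once this is in hand the argument reduces cleanly to the already-established Lemma~\ref{lemma:rank2} applied inside each $(k+1)$-dimensional element. The main point requiring care is simply confirming that Lemma~\ref{lemma:rank2} does apply in the extreme case where the ambient dimension equals $k+1$, so that the $k$-subspaces in question are hyperplanes of $W_i$.
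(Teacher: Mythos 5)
Your proof is correct, but it takes a different route from the paper's own proof of Proposition~\ref{prop:divides}. The paper argues directly and combinatorially: given distinct $k$-subspaces $A,B$, it runs a case analysis on how $A$ and $B$ meet the spread elements (Case 1: one of them lies inside some $W_i$; Case 2: $\dim(A\cap W_i)\neq\dim(B\cap W_i)$ for some $i$, so extend the larger intersection to a $k$-subspace of $W_i$; Case 3: all intersection dimensions agree but $A\cap W_i \neq B\cap W_i$ for some $i$, where one builds by hand a $k$-subspace of $W_i$ containing $A\cap W_i$ but not $B\cap W_i$), and in each case exhibits an explicit $U\in\mathcal{M}$ with $\dim(A\cap U)\neq\dim(B\cap U)$. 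Your argument --- block-diagonalise the incidence matrix of $\mathcal{M}$ via the spread and apply Lemma~\ref{lemma:rank2} inside each $W_i$ --- is in fact exactly the alternative proof the paper itself sketches in the remark immediately following its proof, and your write-up fills in the details correctly, including the worthwhile check that Lemma~\ref{lemma:rank2} still applies in the boundary case $n=k+1$ (the matrix $M^{\mathrm{T}}M$ there is $\alpha J+\beta I$ with $\alpha={k-1\brack 1}_q$, $\beta={k\brack 1}_q-{k-1\brack 1}_q>0$, hence nonsingular). What the two approaches buy: yours is shorter and reduces cleanly to already-proved lemmas; the paper's hands-on case analysis is reused essentially verbatim in the proof of Proposition~\ref{prop:otherwise}, where your block-diagonal structure is unavailable (the subspaces $X_j\oplus Z$ all share the common subspace $Z$ and also meet $V(s,q)$), so the direct argument is needed there regardless.
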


\proof 
We will show that $\mathcal{M}$ is a resolving set by demonstrating that whenever $A$ and $B$ are two distinct $k$-subspaces of $V(n,q)$, there exists some $U\in \mathcal{M}$ with $\dim(A\cap U)\neq \dim(B\cap U)$.

We consider how $A$ and $B$ intersect with the members $W_1,\ldots,W_m$ of the $(k+1)$-spread $\mathcal{S}$.

\begin{description}
\item[Case 1.] If either of $A$ or $B$ (say $A$) is contained within some member of $\mathcal{S}$, then clearly $A\in\mathcal{M}$, and we are done (as $\dim(A\cap B) \neq k$).  
\end{description}

So we suppose not.  For each $i$, let $A_i=A\cap W_i$ and $B_i=B\cap W_i$.  There are now two possible cases.

\begin{description}
\item[Case 2.] Suppose there exists some $i$ where $\dim(B_i)<\dim(A_i)$.  Since $\dim(A_i)<k$, there must exist some $k$-subspace $U$ of $W_i$ which contains $A_i$ as a subspace.  Therefore $\dim(A_i\cap U)=\dim(A_i)$, while $\dim(B_i\cap U) \leq \dim(B_i) < \dim(A_i)$.  In particular, we have $\dim(A_i\cap U) \neq \dim(B_i\cap U)$, and thus $\dim(A\cap U) \neq \dim(B\cap U)$, where $U\in\mathcal{M}$.

\item[Case 3.] The remaining scenario is that $\dim(A_i)=\dim(B_i)$ for all $i$.  However, there must exist some $i$ where $A_i \neq B_i$ (and where both are non-zero); otherwise we would have $A=B$.  For that particular value of $i$, suppose $\dim(A_i)=\dim(B_i)=d<k$.

Now, for any $k$-subspace $U$ of $W_i$, we must have $\dim(A_i\cap U)\geq d-1$ and $\dim(B_i\cap U)\geq d-1$ (this follows from the identity $\dim(A_i\cap U)=\dim(A_i)+\dim(U)-\dim(A_i+U)$, and that $\dim(A_i+U)\leq\dim(W_i)=k+1$; likewise for $\dim(B_i\cap U)$).  Thus, for there to be a $k$-subspace $U$ with $\dim(A_i\cap U) \neq \dim(B_i\cap U)$, we must have (without loss of generality) that $A_i$ is a subspace of $U$, and $B_i$ is not.  We aim to construct such a $U$.

Take a basis $\{a_1,\ldots,a_d\}$ for $A_i$.  Since $A_i\neq B_i$, there exists a vector $x\in B_i\setminus A_i$.  Now, because $x\not\in A_i$, we have that the set $\{a_1,\ldots,a_d,x\}$ is linearly independent.  Extend this to a basis $\{ a_1,\ldots,a_d,x,w_1,\ldots,w_{k-d}\}$ for $W_i$.  Now let $U$ be the $k$-subspace spanned by the vectors $\{a_1,\ldots,a_d,w_1,\ldots,w_{k-d}\}$: by construction, $A_i$ is a subspace of $U$, but $B_i$ is not, since $x\in B_i\setminus U$.  In particular, we see that $d = \dim(A_i\cap U) \neq \dim(B_i\cap U) =d-1$, and thus $\dim(A\cap U) \neq \dim(B\cap U)$, where $U\in\mathcal{M}$.
\end{description}

In all cases, we have $U\in \mathcal{M}$ with $\dim(A\cap U)\neq \dim(B\cap U)$, so therefore $\mathcal{M}$ is a resolving set.  Finally, we observe that 
\[ |\mathcal{M}| = \frac{q^n-1}{q^{k+1}-1} {k+1 \brack 1}_q = \frac{q^n-1}{q^{k+1}-1} \frac{q^{k+1}-1}{q-1} = {n \brack 1}_q. \]
\endproof

We remark that Proposition~\ref{prop:divides} provides an alternative proof of Theorem~\ref{thm:main} in the case where $k+1$ divides $n$.  We also remark that Lemma~\ref{lemma:rank} can be applied to show that the set $\mathcal{M}$ of Proposition~\ref{prop:divides} is a resolving set for $G_q(n,k)$: the incidence matrix of $\mathcal{M}$ is a block-diagonal matrix, where each block is the incidence matrix of the collection of all $k$-subspaces of $V(k+1,q)$.  By Lemma~\ref{lemma:rank2}, this has full rank, and thus so does the incidence matrix of $\mathcal{M}$.

We would also like to obtain a construction of a resolving set when $k+1$ does not divide $n$.  In that situation, there is no $(k+1)$-spread of $V(n,q)$; however, a result of Beutelspacher \cite{Beutelspacher78} provides an alternative.  Following his notation, where $T$ is a set of positive integers, a {\em $T$-partition} of $V(n,q)$ is a partition of the non-zero vectors of $V(n,q)$ into subpaces whose dimensions form the set $T$.  Thus if $T=\{t\}$, then a $T$-partition of $V(n,q)$ is simply a $t$-spread of $V(n,q)$.  The following lemma is an immediate consequence of Lemma 3 in Beutelspacher's 1978 paper \cite{Beutelspacher78}.

\begin{lemma}[Beutelspacher \cite{Beutelspacher78}] \label{lemma:partition}
Suppose $n=r(k+1)+t$, where $0<t<k+1$.  Then there exists a $\{k+1,t\}$-partition of $V(n,q)$.
\end{lemma}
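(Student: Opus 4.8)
The plan is to deduce the partition from the spread constructions already available, reducing the general case to a single small one and quoting Beutelspacher's lemma only for the irreducible core. Write $n=r(k+1)+t$ with $0<t<k+1$. If $r=0$ then $n=t$ and the whole space $V(t,q)$ is by itself a $\{k+1,t\}$-partition, so assume $r\ge 1$.

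First I would set up a \emph{cone reduction} that trades general $r$ for $r=1$. Fix a subspace $U\le V(n,q)$ of dimension $n-t=r(k+1)$ together with a complement $T$ of dimension $t$, so that $V(n,q)=U\oplus T$. Since $(k+1)$ divides $n-t$, the classical result of Dembowski \cite{Dembowski68} cited above furnishes a $(k+1)$-spread $\{S_1,\dots,S_N\}$ of $U$. Let $\rho\colon V(n,q)\to U$ be the projection along $T$ and put $V_i=\rho^{-1}(S_i)=S_i\oplus T$, a subspace of dimension $(k+1)+t$. Because the $S_i$ meet pairwise in $\{0\}$ and each $V_i$ contains $T=\ker\rho$, one checks that $V_i\cap V_j=T$ for $i\ne j$ while $\bigcup_i V_i=V(n,q)$. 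Consequently, if for each $i$ we can exhibit a $\{k+1,t\}$-partition of $V_i$ having $T$ as one of its members, then keeping $T$ once and deleting the common copy of $T$ from every partition yields a $\{k+1,t\}$-partition of all of $V(n,q)$: every non-zero vector outside $T$ lies in exactly one $V_i\setminus T$, and parts living in distinct cones meet only inside $T$, which they avoid.

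This reduces the problem to the case $r=1$: constructing a $\{k+1,t\}$-partition of $W:=V(k+1+t,q)$ that contains a prescribed $t$-subspace $T$. Here a counting-and-intersection argument pins down the shape of any such partition. Since $2(k+1)-(k+1+t)=(k+1)-t\ge 1$, any two distinct $(k+1)$-subspaces of $W$ meet in a subspace of positive dimension, so a partition contains at most one $(k+1)$-space; taking exactly one such space $C$ then forces exactly $q^{k+1}$ subspaces of dimension $t$, one of which is $T$. Writing $W=C\oplus T$, the remaining $q^{k+1}-1$ parts must be $t$-subspaces meeting both $C$ and $T$ trivially, that is, ``graphs'' $Y=\{c+\phi(c):c\in Y_C\}$ of isomorphisms $\phi\colon Y_C\xrightarrow{\ \sim\ }T$ defined on $t$-dimensional subspaces $Y_C\le C$, arranged so that each vector $c+\tau$ with $c,\tau\ne 0$ lies on exactly one of them.

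The construction of this last family is the step I expect to be the main obstacle, and it is precisely where finite-geometric input is unavoidable: the required collection is exactly a \emph{spread set} (equivalently, a maximum rank-distance code of linear maps $C\to T$), and producing one whose covering and pairwise-intersection conditions hold simultaneously is the technical heart of the argument. This is what Lemma~3 of Beutelspacher's paper \cite{Beutelspacher78} supplies. In fact, since that lemma establishes the existence of $\{k+1,t\}$-partitions directly whenever $n=r(k+1)+t$ with $0<t<k+1$, the cleanest route is simply to quote it and observe that our hypothesis is exactly its hypothesis; the cone reduction above then serves to explain why the single case $r=1$ already carries all of the content.
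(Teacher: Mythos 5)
Your proposal is correct in substance, but it is worth being clear about what the paper actually does with this lemma: it gives no proof at all, simply stating the result as an immediate consequence of Lemma~3 of Beutelspacher \cite{Beutelspacher78}, followed by a one-sentence sketch of his construction (a $(k+1)$-spread of a fixed $s$-dimensional subspace $V(s,q)$, $s=r(k+1)$, together with $q^s$ subspaces of dimension $t$ partitioning the vectors of $V(n,q)\setminus V(s,q)$). Since your argument also terminates in a citation of Beutelspacher's Lemma~3 --- as you yourself concede is ``the cleanest route'' --- the two proofs coincide in what they actually establish; what you add in front of the citation is a genuine and correct alternative decomposition. Your cone reduction checks out: $V_i\cap V_j=T$ for $i\neq j$, parts from distinct cones other than $T$ meet only in $\{0\}$, and gluing the cone partitions with $T$ kept once does produce a $\{k+1,t\}$-partition; your structural analysis of the $r=1$ case (at most one $(k+1)$-part since $2(k+1)>k+1+t$, exactly $q^{k+1}$ parts of dimension $t$, the rest being graphs of isomorphisms onto $T$, i.e.\ an MRD-type code) is also accurate, and it is honest of you to flag that constructing that code is the irreducible finite-geometric core you do not supply. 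Three small points. First, Beutelspacher's lemma gives \emph{some} partition of $V(k{+}1{+}t,q)$, while your reduction needs one containing the \emph{prescribed} $t$-subspace $T$ in each cone $V_i=S_i\oplus T$; this follows at once from transitivity of $\mathrm{GL}(V_i)$ on $t$-subspaces, but it should be said. Second, Proposition~\ref{prop:otherwise} uses not merely existence but the specific shape $\mathcal{P}=\mathcal{S}\cup\mathcal{T}$ with $\mathcal{S}$ a $(k+1)$-spread of $V(s,q)$; your glued partition need not have this shape (its $(k+1)$-parts need not lie in a common $s$-dimensional subspace), though it can be arranged to, by further choosing each cone partition to contain $S_i$ as its $(k+1)$-part, again by homogeneity. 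Third, your $r=0$ remark is slightly off --- a single $t$-space is a $\{t\}$-partition rather than a $\{k+1,t\}$-partition --- but this degenerate case never arises in the paper, since $2\leq k\leq n/2$ forces $r\geq 1$.
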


Beutelspacher's construction works as follows: write $s=r(k+1)$ (so that $n=s+t$) and take a $(k+1)$-spread of $V(s,q)$.  The remaining $q^n-q^s$ vectors in $V(n,q)\setminus V(s,q)$ are then partitioned into the non-zero vectors of $t$-subspaces (of which there are necessarily $q^s$).  In what follows, we suppose that $\mathcal{P}$ is a $\{k+1,t\}$-partition of $V(n,q)$.  We write this as $\mathcal{P}=\mathcal{S}\cup\mathcal{T}$, where $\mathcal{S}=\{W_1,\ldots,W_m\}$ is a $(k+1)$-spread of $V(s,q)$, and where $\mathcal{T}=\{X_1,\ldots,X_\ell\}$ consists of $t$-subspaces covering the remaining vectors.  We also let $Z$ denote a fixed \mbox{$(k-t+1)$}-dimensional subspace of $V(s,q)$.

\begin{prop} \label{prop:otherwise}
Suppose $n=s+t$, where $(k+1)$ divides $s$ and $0<t<k+1$.  Let $\mathcal{P}=\mathcal{S}\cup\mathcal{T}$ and $Z$ be as above.  For each $W_i \in \mathcal{S}$, take all the $k$-subspaces (as in Proposition \ref{prop:divides}); for each $X_j\in \mathcal{T}$, extend to the $(k+1)$-subspace $X_j \oplus Z$, and take all $k$-subspaces of this.  Let $\mathcal{M}$ denote the union of these collections: that is,
\[ \mathcal{M} = \bigcup_{i=1}^m \{U\leq W_i \, : \, \dim(U)=k \} \cup \bigcup_{j=1}^\ell \{ U \leq X_j \oplus Z \, : \, \dim(U)=k\}. \]
Then $\mathcal{M}$ is a resolving set for $G_q(n,k)$, of size at most ${(n-t)+(k+1) \brack 1}_q$.
\end{prop}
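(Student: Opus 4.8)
The plan is to mirror the case analysis of Proposition~\ref{prop:divides} after first isolating the one fact that makes that analysis work. Call a $(k+1)$-dimensional subspace $Y$ of $V(n,q)$ \emph{used} if all of its $k$-subspaces lie in $\mathcal{M}$; by construction the used subspaces are exactly the spread members $W_1,\dots,W_m$ together with the extensions $X_1\oplus Z,\dots,X_\ell\oplus Z$. The heart of Proposition~\ref{prop:divides} is really a statement about a single $(k+1)$-subspace: if $Y$ is used and $A\cap Y\neq B\cap Y$, then there is a $k$-subspace $U\leq Y$ (so $U\in\mathcal{M}$) with $\dim(A\cap U)\neq\dim(B\cap U)$. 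Writing $a=\dim(A\cap Y)$ and $b=\dim(B\cap Y)$: if $a=k$ (or symmetrically $b=k$) then $A\leq Y$ and $U=A$ works, since $\dim(B\cap A)<k$ as $A\neq B$; otherwise $a,b<k$, and either $a\neq b$, in which case any $k$-subspace $U\leq Y$ containing the larger trace works, or $a=b$, in which case the basis-extension construction of Case~3 produces a $U$ containing $A\cap Y$ but not $B\cap Y$. Since each used $Y$ has dimension exactly $k+1$, this argument transfers verbatim. So I would reduce the whole proposition to the single claim: if $A\neq B$, then $A\cap Y\neq B\cap Y$ for some used $Y$.

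First I would check that the $X_j\oplus Z$ are genuinely $(k+1)$-dimensional. By Beutelspacher's construction the nonzero vectors of each $X_j$ lie in $V(n,q)\setminus V(s,q)$, so $X_j\cap V(s,q)=\{0\}$; as $Z\leq V(s,q)$, this forces $X_j\cap Z=\{0\}$ and hence $\dim(X_j\oplus Z)=t+(k-t+1)=k+1$. The key structural observation is then that \emph{every piece of the partition $\mathcal{P}$ is contained in a used subspace}: trivially $W_i\subseteq W_i$, and $X_j\subseteq X_j\oplus Z$. To prove the boxed claim I would argue by contraposition. Suppose $A\cap Y=B\cap Y$ for every used $Y$, and take any nonzero $v\in A$. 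Since $\mathcal{P}$ partitions the nonzero vectors of $V(n,q)$, the vector $v$ lies in a unique piece $P\in\mathcal{P}$, and $P\leq Y$ for some used $Y$; thus $v\in A\cap Y=B\cap Y\subseteq B$. Hence $A\subseteq B$, and since $\dim A=\dim B=k$ we conclude $A=B$. This is the crux, and it is pleasantly short precisely because the construction was engineered so that the covering pieces sit inside the used $(k+1)$-subspaces.

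It remains to bound $|\mathcal{M}|$. The spread part contributes $m\,{k+1\brack 1}_q={s\brack 1}_q$ subspaces (using $m=(q^s-1)/(q^{k+1}-1)$), while the $\mathcal{T}$-part contributes at most $\ell\,{k+1\brack 1}_q=q^s{k+1\brack 1}_q$ (only ``at most'' because the subspaces $X_j\oplus Z$ all contain $Z$ and may share $k$-subspaces). Adding these and simplifying the Gaussian binomials gives $|\mathcal{M}|\leq{s\brack 1}_q+q^s{k+1\brack 1}_q={s+k+1\brack 1}_q$, which is ${(n-t)+(k+1)\brack 1}_q$ since $s=n-t$.

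I would expect the only genuine obstacles to be bookkeeping rather than conceptual. The one place that needs care is verifying $\dim(X_j\oplus Z)=k+1$, since the reduction to Proposition~\ref{prop:divides} breaks down if these extensions collapse to dimension $\leq k$; this is exactly where the hypotheses that the $X_j$ avoid $V(s,q)$ and that $\dim Z=k-t+1$ are used. The second point to keep honest is the overlap in the $\mathcal{T}$-part, which is why the size is stated as an upper bound; I would not attempt to compute the overlaps exactly, as the ``at most'' estimate is all that is claimed.
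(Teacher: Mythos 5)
Your proof is correct, but it is organized around a different decomposition than the paper's. The paper's proof splits into cases according to how $A$ and $B$ sit relative to $V(s,q)$: first, if $A$ or $B$ lies inside $V(s,q)$, or if $A\cap V(s,q)\neq B\cap V(s,q)$, it invokes the proof of Proposition~\ref{prop:divides} for the spread part; only then, in the remaining case, does it locate an $X_i$ with $A\cap X_i\neq B\cap X_i$ and transfer the problem to $X_i\oplus Z$ via Cases 2 and 3 of Proposition~\ref{prop:divides}. You instead isolate a local lemma (any single ``used'' $(k+1)$-subspace $Y$ with $A\cap Y\neq B\cap Y$ yields a distinguishing $U\in\mathcal{M}$) and pair it with a global covering claim proved by contraposition: if the traces on all used subspaces agree then, since every piece of $\mathcal{P}$ lies inside a used subspace, every nonzero vector of $A$ lies in $B$, so $A=B$. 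This buys you a genuinely cleaner argument: it is uniform over both parts of the partition (so Proposition~\ref{prop:divides} becomes the special case $\mathcal{T}=\emptyset$), and it sidesteps a slightly delicate step in the paper, namely applying the argument of Proposition~\ref{prop:divides} to the traces $A\cap V(s,q)$ and $B\cap V(s,q)$, which need not be $k$-dimensional. You also make explicit the check that $X_j\cap Z=\{0\}$, hence $\dim(X_j\oplus Z)=k+1$, which the paper's notation takes for granted but which is exactly where the hypothesis on $\mathcal{T}$ and the choice $\dim Z = k-t+1$ are needed. The size bound is computed identically in both proofs. What the paper's route buys in exchange is brevity and an emphasis on the fact that $\mathcal{M}$ contains a resolving set for the subgraph $G_q(s,k)$, which your unified argument does not surface explicitly.
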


\proof To show that $\mathcal{M}$ is a resolving set, we show that whenever $A$ and $B$ are two distinct $k$-subspaces of $V(n,q)$, there exists some $U\in \mathcal{M}$ with $\dim(A\cap U)\neq \dim(B\cap U)$.

If $A$ or $B$ are entirely contained within $V(s,q)$, then we are done; by the arguments used in the proof of Proposition \ref{prop:divides}, $\mathcal{M}$ contains a resolving set for the subgraph $G_q(s,k)$. Also, if $A\cap V(s,q)\neq B\cap V(s,q)$, the proof of Proposition \ref{prop:divides} shows that a vertex of the resolving set for $G_q(s,k)$ is able to distinguish $A$ and $B$.  Thus the only situation remaining is when both $A$ and $B$ intersect non-trivially with $V(n,q)\setminus V(s,q)$, and where $A\cap V(s,q) = B\cap V(s,q)$.

Since $A\neq B$ but $A\cap V(s,q) = B\cap V(s,q)$, it follows that there exists a $t$-subspace $X_i\in \mathcal{T}$ where $A\cap X_i \neq B\cap X_i$.  Suppose without loss of generality that $\dim(A\cap X_i) \leq \dim(B\cap X_i)$.  Then there exists a vector $x \in (B\cap X_i)\setminus A$.  It follows that $x\in B\cap (X_i \oplus Z)$ but $x\not\in A\cap (X_i \oplus Z)$; consequently, we have $A\cap (X_i \oplus Z) \neq B\cap (X_i \oplus Z)$.  By the arguments in Cases 2 and 3 of Proposition~\ref{prop:divides}, it follows that there exists a $k$-subspace $U\leq X_i\oplus Z$ satisfying $\dim(A\cap U) \neq \dim(B\cap U)$.  By construction, $U\in \mathcal{M}$. 

Finally, we obtain the bound by observing that 
\begin{eqnarray*} 
|\mathcal{M}| & \leq & {s \brack 1}_q + q^s{k+1 \brack 1}_q \\
              & = & \frac{q^s-1}{q-1} + q^s \frac{q^{k+1}-1}{q-1} \\
              & = & \frac{q^{s+k+1}-1}{q-1} \\
              & = & {(n-t)+(k+1) \brack 1}_q.
\end{eqnarray*}
\endproof

We remark that the bound on the size of $\mathcal{M}$ given in Proposition~\ref{prop:otherwise} is likely to be an over-estimate, for two reasons.  First, it is possible that for distinct $X_i,X_j$, we may have $X_i \oplus Z = X_j \oplus Z$; second, even if $X_i \oplus Z \neq X_j \oplus Z$, they may have a common $k$-subspace.  The precise number of repetitions will be dependent on the choice of $Z$ and the structure of the partition $\mathcal{P}=\mathcal{S}\cup\mathcal{T}$, and thus counting the actual size of $\mathcal{M}$ will be difficult in general.  However, in the case $t=1$, it is straightforward.  First, if $t=\dim(X_i)=1$, we can count precisely the number of $1$-subspaces in $(X_i\oplus Z)\setminus Z$ to be $q^k$; second, if $(X_i \oplus Z) \neq (X_j \oplus Z)$, the only $k$-subspace that can be contained in $(X_i \oplus Z) \cap (X_j \oplus Z)$ is $Z$ itself. 
Since the choice of $Z$ was arbitrary, we can take $Z$ to be one of the $k$-subspaces of some $W_j$.  Hence, if $t=1$, we have
\[ |\mathcal{M}| = {n-1 \brack 1}_q + \frac{q^{n-1}}{q^k} \left( {k+1 \brack 1}_q - 1\right) = {n \brack 1}_q + q^{n-k} {k-1 \brack 1}_q. \]

\section{Discussion}
A natural question is to compare our result in Theorem~\ref{thm:main} with the previously-known upper and lower bounds on metric dimension.

An implicit, approximate lower bound can be obtained using \cite[Proposition 3.6]{bsmd}, which in the case of Grassmann graphs yields
\[ \mu(G_q(n,k)) \gtrapprox \log_k {n \brack k}_q. \]
This bound is clearly much smaller than the upper bound we obtained in Theorem~\ref{thm:main}.  However, this lower bound is obtained only by considering the diameter of a graph and not its structure, and is often far from the actual value of $\mu$.  (See \cite[\S3.1]{bsmd} for a more detailed discussion.)

As mentioned in the introduction, there are known upper bounds due to Babai \cite{Babai81} which can be applied here (see \cite{bsmd} for an interpretation of these in terms of metric dimension of distance-regular graphs).  For the Grassmann graphs, Babai's most general bound (see \cite[Theorem 2.1]{Babai81}; see also \cite[Theorem 3.15]{bsmd}) yields
\begin{equation}
 \mu(G_q(n,k)) < 4\sqrt{ {n \brack k}_q } \log {n \brack k}_q 
\end{equation}
while his stronger bound (see \cite[Theorem 2.4]{Babai81}; see also \cite[Theorem 3.22]{bsmd}) yields
\begin{equation}
 \mu(G_q(n,k)) < 2k  \frac{ {n \brack k}_q }{ {n \brack k}_q - M } \log {n \brack k}_q 
\end{equation}
where 
\[ M = \max_{0\leq j \leq k} q^{j^2} {n-k \brack j}_q {k \brack j}_q. \]
These bounds are difficult to evaluate exactly, so we conducted some experiments using \textsc{Maple} to compare these bounds with the one obtained in Theorem \ref{thm:main}.  Our experiments indicate that for $k>2$, our constructive bound of ${n \brack 1}_q$ is an improvement on Babai's weaker bound.  When $k=2$ and $q$ is large, Babai gives a better bound.  They also suggest that Babai's stronger bound is, for fixed $q$ and $n$, is descreasing in $k$ (in comparison, our bound is independent of $k$ and thus stays fixed), and gives a better bound for larger values of $k$. 
However, it should be mentioned that Babai's results are obtained using a result of Lov\'asz on fractional covers in hypergraphs \cite{Lovasz75}, and are not explicit, whereas (in the case where $k+1$ divides $n$), our bound is met by an explicit construction of a resolving set.  Also, the proof of Theorem~\ref{thm:main} implictly gives a method of constructing resolving sets, by sequentially adding $k$-subspaces so that the incidence matrix has full rank. 

Another natual question concerns the Grassmann graphs being the $q$-analogue of the Johnson graphs, and comparing our result with known bounds for the metric dimension of those graphs.  Often, when one has obtained an invariant of the Grassmann graph $G_q(n,k)$ as a function of $q$, then by taking the limit as $q\to 1$, one obtains the same invariant for the Johnson graph $J(n,k)$.  Now, if we consider our bound from Theorem~\ref{thm:main}, it is easy to see that
\[ \lim_{q\to 1} {n \brack 1}_q = n, \]
which is precisely the bound for the Johnson graphs in Theorem~\ref{thm:Johnson}. 

Finally, we remark that it is likely that one can refine the constructions in Propositions \ref{prop:divides} and \ref{prop:otherwise} (in the manner of the results for Johnson graphs in \cite{johnson}) to obtain a tighter bound on the metric dimension.  However, unlike the case of the Johnson graphs, we believe it is unlikely that such refinements will affect the order of magnitude of the bound too much, or that they would have such a tidy form.

\subsection*{Acknowledgements}
\label{sec:ack}
The authors would like to thank J.\ C\'aceres for first communicating the results of \cite{Caceres09}, and P.~J.~Cameron and K.~Wang for pointing out errors in earlier versions of this paper.  R.~F.~Bailey acknowledges support from a PIMS Postdoctoral Fellowship.  K.~Meagher acknowledges support from an NSERC Discovery Grant.


\begin{thebibliography}{99}

\bibitem{Babai81} L.~Babai, On the order of uniprimitive permutation groups, {\em Ann. Math.} {\bf 113} (1981), 553--568.

\bibitem{johnson} R.~F.~Bailey, J.~C\'{a}ceres, D.~Garijo, A.~Gonz\'alez, A.~M\'{a}rquez, K.~Meagher and M.~L.~Puertas, Resolving sets in Johnson and Kneser graphs, in preparation.

\bibitem{bsmd} R.~F.~Bailey and P.~J.~Cameron, Base size, metric dimension and other invariants of groups and graphs, {\em Bull. London Math. Soc.} {\bf 43} (2011), 209--242.

\bibitem{Beutelspacher78} A.~Beutelspacher, Partitions of finite vector spaces: an application of the Frobenius number in geometry, {\em Arch. Math. (Basel)} {\bf 31} (1978/79), 202--208.

\bibitem{BCN} A.~E.~Brouwer, A.~M.~Cohen and A.~Neumaier, {\em Distance-Regular Graphs}, Springer--Verlag, Berlin, 1989.


\bibitem{Caceres09} J.~C\'{a}ceres, D.~Garijo, A.~Gonz\'alez, A.~M\'{a}rquez and M.~L.~Puertas, Dimensi\'on m\'etrica y n\'umero determinante de los grafos de Kneser y los grafos de Johnson, in: {\em Avances en Matem\'atica Discreta en Andaluc\'ia y en el Algarve} (eds.\ A.~Gonz\'alez and J.~R.~Portillo), Galaroza, October 2009. [Spanish; extended abstract.]

\bibitem{Dembowski68} P.~Dembowski, {\em Finite Geometries}, Springer--Verlag, Berlin/Heidelberg, 1968.

\bibitem{HararyMelter76} F.~Harary and R.~A.~Melter, On the metric dimension of a graph, {\em Ars Combin.} {\bf 2} (1976), 191--195; {\bf 4} (1977), 318.

\bibitem{Lovasz75} L.~Lov\'asz, On the ratio of optimal integral and fractional covers, {\em Discrete Math.} {\bf 13} (1975), 383--390.

\bibitem{Slater75} P.~J.~Slater, Leaves of trees, {\em Congr. Numer.} {\bf 14} (1975), 549--559.

\end{thebibliography}
\end{document}